\pgfplotsset{compat=1.15}
\theoremstyle{definition}
\newtheorem{mydef}{Definition}[section]
\theoremstyle{plain}
\newtheorem{thm}[mydef]{Theorem}
\newtheorem{all}[mydef]{Statement}
\newtheorem{cl}[mydef]{Claim}
\newtheorem{lemma}[mydef]{Lemma}
\newtheorem*{jel}{Notation}
\newcommand\cC{{\mathcal C}}
\newcommand\cF{{\mathcal F}}
\newcommand\cn{{\mathfrak{C}_n}}
\title{Chain-dependent Conditions in Extremal Set Theory}
\author{D\'aniel T. Nagy}
\address{Alfr\'ed R\'enyi Institute of Mathematics}
\email{nagydani@renyi.hu}
\thanks{D.T. Nagy's research is partially supported by NKFIH grants FK 132060 and PD 137779 and by the J\'anos Bolyai Research Fellowship of the Hungarian Academy of Sciences}
\author{Kartal Nagy}
\address{E{\"o}tv{\"o}s Lor\'and University}
\email{kartal97@student.elte.hu}
\begin{document}


\maketitle

\begin{abstract}
In extremal set theory our usual goal is to find the maximal size of a family of subsets of an $n$-element set satisfying a condition. A condition is called chain-dependent, if it is satisfied for a family if and only if it is satisfied for its intersections with the $n!$ full chains. We introduce a method to handle problems with such conditions, then show how it can be used to prove three classic theorems. Then, a theorem about families containing no two sets such that $A\subset B$ and $\lambda \cdot |A| \le |B|$ is proved. Finally, we investigate problems where instead of the size of the family, the number of $\ell$-chains is maximized. Our method is to define a weight function on the sets (or $\ell$-chains) and use it in a double counting argument involving full chains.

\end{abstract}

\section{Introduction}

\begin{jel}
We will denote the set $\{ 1,2,\dots,n \}$ by $[n]$, the family of the subsets of $[n]$ by $2^{[n]}$. 
\end{jel}

Sperner's Theorem \cite{sperner} is one of the central results in extremal set theory. It describes the largest possible families of finite sets none of which contains any other set in the family. It is named after Emanuel Sperner, who published it in 1928.

\begin{thm}\label{sperner}
Let $\cF\subset 2^{[n]}$ be a family such that there are no two sets $A,B\in\cF$ satisfying $A\subset B$. Then $|\cF|\le \displaystyle\binom{n}{\lfloor \frac{n}{2} \rfloor}$.
\end{thm}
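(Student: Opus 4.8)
The plan is to prove Sperner's theorem via the classical chain-counting (double-counting) argument, which fits naturally with the chain-dependent framework the paper advertises. The key object is the collection of the $n!$ maximal chains in $2^{[n]}$, where a maximal chain is a sequence $\emptyset = C_0 \subset C_1 \subset \cdots \subset C_n = [n]$ with $|C_i| = i$. Each such chain is obtained by adding the elements of $[n]$ one at a time, so there are exactly $n!$ of them.

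First I would observe that a family $\cF$ with no containment (an antichain) meets each maximal chain in at most one set, since any two distinct members of $\cF$ lying on a common chain would be comparable, contradicting the antichain property. The main computation is then to count, in two ways, the pairs $(A, \cC)$ where $A \in \cF$ and $\cC$ is a maximal chain passing through $A$. A fixed $k$-element set $A$ lies on exactly $k!\,(n-k)!$ maximal chains, since one freely orders the $k$ elements inside $A$ and the $n-k$ elements outside. Summing over $A \in \cF$ gives the total count $\sum_{A \in \cF} |A|!\,(n-|A|)!$, while the antichain condition bounds the same total by the number of chains, namely $n!$.

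Combining these yields the LYM-type inequality $\sum_{A \in \cF} \frac{|A|!\,(n-|A|)!}{n!} = \sum_{A \in \cF} \binom{n}{|A|}^{-1} \le 1$. To finish, I would bound each term from below by replacing $\binom{n}{|A|}$ with the largest binomial coefficient $\binom{n}{\lfloor n/2 \rfloor}$, which is the maximum of $\binom{n}{k}$ over all $k$. This gives $|\cF| \cdot \binom{n}{\lfloor n/2 \rfloor}^{-1} \le \sum_{A \in \cF} \binom{n}{|A|}^{-1} \le 1$, and rearranging produces the desired bound $|\cF| \le \binom{n}{\lfloor n/2 \rfloor}$.

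The steps here are all routine, so I do not expect a genuine obstacle; the only point requiring a small justification is that $\binom{n}{\lfloor n/2 \rfloor}$ is indeed the maximum binomial coefficient, which follows from the unimodality of the sequence $\binom{n}{k}$ (the ratio $\binom{n}{k+1}/\binom{n}{k} = (n-k)/(k+1)$ exceeds $1$ precisely when $k < (n-1)/2$). Since the paper frames everything through intersections with full chains, I expect the author's proof to emphasize exactly this chain-intersection viewpoint, presenting the $n!$ maximal chains as the central structure and the double count as the model instance of the chain-dependent method.
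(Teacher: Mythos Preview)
Your argument is correct and is exactly Lubell's proof via the LYM inequality. The paper, however, does not pass through LYM. Instead it applies its general Theorem~\ref{cdt}: each set $G\in\cF$ on a full chain is assigned weight $\omega(G)=\binom{n}{|G|}$, so that averaging $\sum_{G\in\cC}\omega(G)$ over all $n!$ chains yields $|\cF|$ on the nose rather than $\sum_{A\in\cF}\binom{n}{|A|}^{-1}$. The antichain condition then bounds the weight of any single chain by $\binom{n}{\lfloor n/2\rfloor}$ directly, and the result follows in one step without the intermediate inequality $\sum\binom{n}{|A|}^{-1}\le 1$ and the subsequent termwise bound.

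For Sperner's theorem itself the two arguments are equivalent in strength, and indeed the paper explicitly acknowledges the kinship with Lubell's method. The point of the paper's weighting, though, is that it gives sharp bounds even when the extremal family is not concentrated near the middle level; this is what allows the same template to handle Theorems~\ref{erdos} and~\ref{katona} and the $\lambda$-times-bigger result without extra work, whereas the unweighted LYM approach would require additional argument in those cases.
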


There are many generalizations and variants to Theorem \ref{sperner}. We will mention just two of these that are relevant to our topic:

\begin{thm}\label{erdos}
\emph{(Erdős, (1945) \cite{erdos}):} Let $\cF\subset 2^{[n]}$ be a family such that there are no two sets $A,B\in\cF$ satisfying $A\subset B$ and $\vert B - A \vert > k$. Then $$|\cF|\le \sum_{i = \lfloor \frac{n-k}{2} \rfloor}^{\lfloor \frac{n+k}{2} \rfloor} \displaystyle\binom{n}{i}.$$
\end{thm}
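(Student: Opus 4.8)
The plan is to exploit the chain-dependent nature of the condition and reduce the problem to what happens along individual full chains, exactly in the spirit of the method above. First I would observe that a forbidden pair $A\subset B$ with $|B-A|>k$ always lies on some full chain $\emptyset=C_0\subset C_1\subset\dots\subset C_n=[n]$, and that along such a chain the members of $\cF$ that are present occupy sizes spanning an interval of length at most $k$; hence $|\cF\cap\mathcal{C}|\le k+1$ for every full chain $\mathcal{C}$. The matching lower-bound construction is the union of the $k+1$ central levels $\binom{[n]}{i}$ with $\lfloor\frac{n-k}{2}\rfloor\le i\le\lfloor\frac{n+k}{2}\rfloor$, whose comparable pairs differ in size by at most $k$ and whose cardinality is exactly the claimed right-hand side.

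The real work is to turn the per-chain bound $|\cF\cap\mathcal{C}|\le k+1$ into the stated global bound. I would first dispose of the naive route: counting incident pairs $(F,\mathcal{C})$ with $F\in\mathcal{C}$, and using that $F$ lies on $|F|!\,(n-|F|)!$ full chains, yields the LYM-type inequality $\sum_{F\in\cF}\binom{n}{|F|}^{-1}\le k+1$. This is tight for the extremal family but, for $k\ge 1$, it does not by itself give $|\cF|\le\sum_i\binom{n}{i}$. To obtain the exact bound I would instead use a symmetric chain decomposition of $2^{[n]}$ (de Bruijn--Tengbergen--Kruyswijk): partition all subsets into symmetric chains, the one from level $j$ to level $n-j$ occurring with multiplicity $\binom{n}{j}-\binom{n}{j-1}$. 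Each such chain meets $\cF$ in at most $\min(k+1,\,n-2j+1)$ sets, since a sub-collection of size-span at most $k$ lives in $k+1$ consecutive levels. A symmetric chain is centred at $n/2$, so it meets the (also centred) central band in exactly $\min(k+1,\text{its length})$ sets; summing over all chains therefore counts precisely the elements of the extremal family, giving $|\cF|\le\sum_{i=\lfloor(n-k)/2\rfloor}^{\lfloor(n+k)/2\rfloor}\binom{n}{i}$, after a short parity check that the central band matches up.

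Alternatively, and more closely aligned with the chain-dependent framework, I would relax to the linear program maximizing $\sum_F x_F$ over $x_F\in[0,1]$ subject to $\sum_{F\in\mathcal{C}}x_F\le k+1$ for every full chain $\mathcal{C}$, and symmetrize an optimal solution by averaging over the action of $S_n$ on $[n]$. The averaged solution is feasible, keeps the same objective value, and is constant on each level; since every full chain contains exactly one set of each size, its chain constraint collapses to $\sum_{i=0}^{n}y_i\le k+1$ with $y_i\in[0,1]$, whose maximum of $\sum_i\binom{n}{i}y_i$ is attained by setting $y_i=1$ on the $k+1$ largest binomial coefficients. The main obstacle, in either route, is exactly this passage from the crude chain bound to a tight total: one must use that $2^{[n]}$ splits into symmetric chains, equivalently that it has the normalized matching property, rather than treating the full chains as an unstructured cover, which is what the bare LYM computation fails to capture.
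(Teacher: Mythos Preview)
Both of your routes are correct, but neither matches the paper's proof, which is actually simpler than you seem to expect. The paper applies its Theorem~\ref{cdt} directly: it assigns to each $G\in\cF$ the weight $\omega(G)=\binom{n}{|G|}$, observes that along any full chain the at most $k+1$ sets of $\cF$ carry total weight at most the sum of the $k+1$ largest binomial coefficients, and then averages this over all $n!$ full chains. Because the weight is chosen so that $\omega(G)$ times the fraction of chains through $G$ equals~$1$, the average is exactly $|\cF|$, and the bound follows in one line. No symmetric chain decomposition, no LP symmetrization.

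Your SCD argument is a genuinely different proof (indeed the classical one), and it buys a partition into disjoint chains rather than an average over overlapping full chains. Your LP-and-symmetrize argument is closer in spirit to the paper: after symmetrization your objective becomes $\sum_i\binom{n}{i}y_i$ with $\sum_i y_i\le k+1$, which is precisely the per-chain weight bound the paper writes down directly. The one point where your write-up overreaches is the closing claim that ``one must use that $2^{[n]}$ splits into symmetric chains, equivalently that it has the normalized matching property''; the paper's weighted-chain argument uses neither, so this is not in fact the essential obstacle. The fix to the naive LYM count is not structural (SCD) but analytic: replace the unit weight by $\binom{n}{|G|}$.
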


\begin{thm}\label{katona}
\emph{(Katona, (1971) \cite{katona}):} Let $\cF\subset 2^{[n]}$ be a family such that there are no two sets $A,B\in\cF$ satisfying $A\subset B$ and $\vert B - A \vert < k$. Then $$|\cF|\le \sum_{i \equiv \lfloor \frac{n}{2} \rfloor \pmod{k}} \displaystyle\binom{n}{i}.$$
\end{thm}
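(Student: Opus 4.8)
The plan is to exploit that the forbidden configuration is chain-dependent and to run a \emph{weighted} double count over all $n!$ full chains. First I would observe that on a full chain $\mathcal{C}\colon \emptyset=C_0\subset C_1\subset\cdots\subset C_n=[n]$ any two members of $\cF$ lying on $\mathcal{C}$ are comparable, so the hypothesis restricted to $\mathcal{C}$ says precisely that the sizes of the sets in $\cF\cap\mathcal{C}$ form a set $S_{\mathcal{C}}\subseteq\{0,1,\dots,n\}$ whose elements are pairwise at distance at least $k$. Conversely, any comparable pair $A\subset B$ lies on a common full chain, so if this holds on every full chain then $\cF$ satisfies the hypothesis; this is the sense in which the condition is chain-dependent.

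The key idea is to weight each set $A\in\cF$ by $\binom{n}{|A|}$ and to double count $T=\sum_{\mathcal{C}}\sum_{A\in\cF\cap\mathcal{C}}\binom{n}{|A|}$ over all full chains $\mathcal{C}$. Counting by sets, a fixed $A$ with $|A|=j$ lies on exactly $j!(n-j)!$ full chains, so its total contribution is $\binom nj\,j!(n-j)!=n!$, whence $T=n!\,|\cF|$. Counting by chains, the inner sum for each $\mathcal{C}$ equals $\sum_{j\in S_{\mathcal{C}}}\binom nj$, which is at most $M:=\max\bigl\{\sum_{j\in S}\binom nj : S\subseteq\{0,\dots,n\},\ |s-s'|\ge k \text{ for distinct } s,s'\in S\bigr\}$. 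Combining the two counts gives $n!\,|\cF|\le n!\,M$, i.e. $|\cF|\le M$. I note that the identical scheme with the per-chain bound $|S_{\mathcal{C}}|\le 1$ reproves Theorem \ref{sperner}, and with $\max S_{\mathcal{C}}-\min S_{\mathcal{C}}\le k$ reproves Theorem \ref{erdos}; this is the uniform chain-dependent method advertised in the introduction.

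It remains to evaluate the one-dimensional quantity $M$, and this is the step I expect to be the main obstacle. I claim the maximum is attained by the arithmetic progression of common difference $k$ through the central level, that is, by $S^{\ast}=\{j : j\equiv\lfloor n/2\rfloor\pmod k\}$, which yields $M=\sum_{i\equiv\lfloor n/2\rfloor\pmod k}\binom ni$ and completes the proof. Since $S^{\ast}$ is itself admissible we have $M\ge\sum_{j\in S^{\ast}}\binom nj$ for free, so only the reverse inequality needs work. The plan is an exchange argument built on the symmetry $\binom nj=\binom n{n-j}$ and the unimodality (indeed log-concavity) of the binomial coefficients: starting from an optimal admissible $S$, a gap of size at least $2k$ can be shrunk by inserting an intermediate level (which adds positive weight), so one reduces to progressions with all gaps equal to $k$; one then shifts such a progression toward the centre, where unimodality and symmetry guarantee the total weight does not decrease. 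The delicate part is purely the boundary bookkeeping near levels $0$ and $n$ together with the parity of $n$, needed to certify that the shifted optimum lands exactly on the residue class of $\lfloor n/2\rfloor$ rather than a neighbouring class; this accounting, not any conceptual difficulty, is the crux.
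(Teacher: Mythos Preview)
Your framework is exactly the paper's: weight each $A\in\cF$ by $\binom{n}{|A|}$, double count over the $n!$ full chains, and reduce to the one-dimensional problem of maximizing $\sum_{j\in S}\binom nj$ over $k$-separated $S\subseteq\{0,\dots,n\}$. The paper packages this as its Theorem~\ref{cdt} and then isolates the optimization as a separate lemma.

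Your sketch of that lemma has two soft spots. First, ``insert into a gap $\ge 2k$'' only brings all gaps below $2k$, not down to exactly $k$; you still need a move for a gap strictly between $k$ and $2k$. The paper closes this by a local shift: if $h_{i+1}-h_i>k$ then replacing $h_i$ by $h_i+1$ or $h_{i+1}$ by $h_{i+1}-1$ does not violate admissibility and (by unimodality) one of the two strictly increases the sum; together with appending $0$ or $n$ when the ends are loose, this forces an optimal $S$ to be an entire residue class modulo $k$ in $[0,n]$. Second, your ``shift the progression toward the centre'' step is precisely where bare unimodality is not enough: shifting a full residue class by $1$ moves some terms up and others down, and different residue classes even have different cardinalities inside $[0,n]$, so a term-by-term comparison does not go through. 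The paper does \emph{not} argue by shifting. It sets $H_a^n=\{i\in[0,n]:i\equiv \tfrac n2+a\pmod k\}$, disposes of $k=2$ (both classes sum to $2^{n-1}$), and for $k\ge3$ proves by induction on $n$ that $|a|<|b|\Rightarrow f(H_a^n)>f(H_b^n)$ via Pascal's rule, which yields the splitting $f(H_a^n)=f(H_{a-1/2}^{n-1})+f(H_{a+1/2}^{n-1})$. This induction cleanly absorbs exactly the boundary and parity bookkeeping you flagged as the crux, and is the substantive difference between your plan and the paper's proof.
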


Griggs \cite{griggs88} found an alternate proof for the above theorem, describing the cases of equality as well.

These theorems maximize the size of a family of subsets of $[n]$ satisfying a certain condition $D$. In the present paper we will show what is the common property of these conditions, and then prove a theorem that provides a method to solve all such problems.

\begin{mydef}
We call a family $\mathcal{C} = \{A_0, A_1, \dots A_n \}$ a {\it full chain} if $A_0$, $A_1 \dots A_n \in 2^{[n]}$ are such that $\vert A_i \vert = i$, and $A_i \subset A_{i+1}$ for every $0 \leq i < n$. The family of all $n!$ full chains in $2^{[n]}$ is denoted by $\cn$.
\end{mydef}

\begin{mydef}
We say that a condition $D$ is a {\it chain-dependent} if $D$ is satisfied for a family $\mathcal{F} \subset 2^{[n]}$ if and only if it is satisfied for all families $\mathcal{F} \cap \mathcal{C}$, where $\cC\in\cn$ is a full chain.
\end{mydef}

For example, the property of being an antichain is chain-dependent.

In Section 2, we will prove our main theorem that will help us to find an upper bound on the size of a family in $2^{[n]}$ that satisfies a condition $D$. In Section 3, it will be used to find the maximal size of such a family in the case of various chain-dependent conditions, namely to reprove Theorems \ref{sperner}, \ref{erdos} and \ref{katona}. In Section 4 we consider another chain-dependent condition, and prove a new theorem about families containing no two sets such that $A\subset B$ and $\lambda \cdot |A| \le |B|$.

In extremal set theory, after the problem of finding the largest family in $2^{[n]}$ satisfying a certain property is solved, a common way to move forward is to determine the maximum number of $\ell$-chains $G_1\subset \dots \subset G_\ell$ in a family satisfying the same property. See \cite{lchain}, \cite{containments}, \cite{genforb} for examples of such results. In Section 5 we generalize Theorem \ref{erdos} in such a way and discuss a possible generalization of Theorem \ref{katona}.


\section{Main theorem and proof}



\begin{jel}
Let $\mathcal{F} \subset 2^{[n]}$ be a family and $D$ a condition. Then $\mathcal{F} \rightarrowtail D$ means that $\mathcal{F}$ satisfies the condition $D$.
\end{jel}

We introduce a weight function as follows:


\begin{mydef}
Let $\cF\subset 2^{[n]}$ be a family. For a set $G\subset [n]$, let $\omega_{\cF} (G):= \displaystyle\binom{n}{\vert G \vert}$ if $G \in \mathcal{F}$, and $\omega_{\cF} (G):= 0$ otherwise.
We define
$$s(\cF):=\max_{\cC\in \cn}  \sum_{G \in \mathcal{C}} \omega_{\cF} (G).$$
\end{mydef}

\begin{mydef}
Let $D$ be a condition. Then $$ S(n,D):=\max_{\cF\subset 2^{[n]},~\cF \rightarrowtail D} s(\mathcal{F}).$$
\end{mydef}

In other words, $S(n,D)$ is defined as the maximum possible weight of a full chain over all $\cF$ satisfying $D$.

\begin{thm} \label{cdt}
Let $n$ be a positive integer and $\mathcal{F} \subset 2^{[n]}$. If $\mathcal{F} \rightarrowtail D$ holds for a condition $D$, then $\vert \mathcal{F} \vert \leq S(n,D)$. 
\end{thm}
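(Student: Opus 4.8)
The plan is to run a weighted averaging argument over all $n!$ full chains, in the spirit of the classical chain-averaging (LYM) method, where the weight $\omega$ has been chosen precisely so that every set of $\cF$ contributes the same amount to the total count. First I would compute the double sum $\sum_{\cC\in\cn}\sum_{G\in\cC}\omega(G)$ by exchanging the order of summation, rewriting it as $\sum_{G\subset[n]}\omega(G)\cdot N(G)$, where $N(G)$ denotes the number of full chains containing $G$.

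The key counting step is to determine $N(G)$. A full chain through a set $G$ with $|G|=i$ is obtained by choosing an ordering of the $i$ elements of $G$ (this builds the chain from $\emptyset$ up to $G$) together with an ordering of the remaining $n-i$ elements (this builds the chain from $G$ up to $[n]$), so $N(G)=i!\,(n-i)!$. The point of the weight is that for $G\in\cF$ we get $\omega(G)\cdot N(G)=\binom{n}{i}\cdot i!\,(n-i)!=n!$, independent of $i$, while for $G\notin\cF$ the product is $0$. Hence the double sum equals exactly $n!\cdot|\cF|$.

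Dividing by $n!$ shows that the average value of $\sum_{G\in\cC}\omega(G)$ over the $n!$ full chains is exactly $|\cF|$. Since the maximum of a quantity is always at least its average, $s(\cF)=\max_{\cC\in\cn}\sum_{G\in\cC}\omega(G)\ge|\cF|$. Finally, because $\cF\rightarrowtail D$, the definition of $S(n,D)$ as a maximum of $s(\cdot)$ over all families satisfying $D$ gives $s(\cF)\le S(n,D)$, and combining the two inequalities yields $|\cF|\le S(n,D)$, as claimed.

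The argument is short and its only real content is the choice of weight; the step I would watch most carefully is the chain-counting identity $N(G)=i!\,(n-i)!$ together with its exact cancellation against $\omega(G)=\binom{n}{i}$, since the entire proof rests on this product being constant (equal to $n!$) on every member of $\cF$.
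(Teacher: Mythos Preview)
Your proof is correct and essentially identical to the paper's own argument: both compute the double sum $\sum_{\cC\in\cn}\sum_{G\in\cC}\omega(G)$ by swapping the order of summation, use that $|G|!\,(n-|G|)!$ full chains pass through $G$, and exploit the cancellation $\binom{n}{|G|}\cdot|G|!\,(n-|G|)!=n!$ to obtain $n!\cdot|\cF|$, then compare with the maximum $S(n,D)$. The only cosmetic difference is that the paper upper-bounds each chain-sum by $S(n,D)$ first and then averages, whereas you phrase it as ``average $\le$ max''; these are the same step.
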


\begin{proof}

Obviously 

\begin{equation}\label{Somega}
    \sum_{G \in \mathcal{C}} \omega_{\cF} (G) \leq s(\cF) \leq  S(n,D)
\end{equation}

holds for any full chain $\cC\in\cn$. We can write this inequality:

$$ S(n,D) \geq \frac{1}{n!} \sum_{\cC\in\cn} \left( \sum_{G \in \cC} \omega_{\cF} (G) \right) = \frac{1}{n!} \sum_{G \in \cF} \left( \sum_{\cC \in \cn:~ G \in \cC} \omega_{\cF} (G) \right) = $$

$$ \frac{1}{n!} \sum_{G \in \mathcal{F}} \vert G \vert !  \left(n- \vert G \vert \right) ! \omega_{\cF}  (G) = \sum_{G \in \mathcal{F}} 1 = \vert \mathcal{F} \vert.$$

The first inequality follows from (\ref{Somega}), since $|\cn| = n!$. At the third step we use the fact that $k! (n-k)!$ full chains go through any given set of size $k$. Altogether we get an upper bound for $|\mathcal{F}|$.
\end{proof}

While Theorem \ref{cdt} holds for any condition $D$, it is particularly useful for chain-dependent ones, since determining the value of $S(n,D)$ is usually easy for them. We also note that the weight function $\omega$ was used previously in \cite{dtnagy} and \cite{gerbner} to solve problems concerning specific forbidden structures without a general approach.

Our method is similar to Lubell's \cite{lubell} proof of Sperner's Theorem that gave an upper bound on $|\cF|$ by examining the number of sets $\cF$ contains from a random full chain. (See \cite{griggsli} for applications of this technique.) By adding different weights to sets of different size, we can bound $|\cF|$ more precisely, and prove exact bounds even if the extremal family contains sets whose size is far from $\frac{n}{2}$.

\section{New proofs for earlier theorems}

In this section we will show how to use Theorem \ref{cdt} to prove Theorems \ref{sperner}, \ref{erdos} and \ref{katona}, all of which are about families satisfying a chain-dependent condition.

First, let us consider Theorem \ref{sperner}. Let $D$ denote the condition ``does not contain two different sets $A,B$ such that $A\subset B$'' or equivalently ``is an antichain''. If $\cF \subset 2^{[n]}$ is an antichain, then any full chain $\cC\in\cn$ contains at most one set from $\cF$. The weight $\omega_{\cF}(F)$ of this single set $F$ is at most $\displaystyle\binom{n}{\lfloor \frac{n}{2} \rfloor}$. Since this applies to all antichains $\cF$ and full chains $\cC$, we have $S(n,D)\le\displaystyle\binom{n}{\lfloor \frac{n}{2} \rfloor}$. Then Theorem \ref{cdt} implies $|\cF|\le\displaystyle\binom{n}{\lfloor \frac{n}{2} \rfloor}$. The corresponding lower bound is trivially given by the antichain formed by all sets of size $\lfloor \frac{n}{2} \rfloor$.

In Theorem \ref{erdos}, the condition $D$ is that the family $\cF$ contains no two sets such that $A\subset B$ and $|B - A|>k$. This implies that a full chain contains at most $k+1$ sets of $\cF$. Their total weight is at most the sum of the $k+1$ largest binomial coefficients, therefore
$$S(n,D)\le \sum_{i = \lfloor \frac{n-k}{2} \rfloor}^{\lfloor \frac{n+k}{2} \rfloor} \displaystyle\binom{n}{i}.$$
Theorem \ref{cdt} implies that this sum is an upper bound for $|\cF|$ as well. The corresponding lower bound on $|\cF|$ is given by taking all sets of size between $\lfloor \frac{n-k}{2} \rfloor$ and $\lfloor \frac{n+k}{2} \rfloor$.

Finally, in Theorem \ref{katona}, the condition $D$ states that $\cF$ contains no two sets $A,B$ such that $A\subset B$ and $|B - A| < k$. This means that the maximum total weight of the sets in a full chain is given by the maximum of the expression $\displaystyle\sum_{i=1}^t\binom{n}{a_i}$, where $a_{i+1}\ge a_{i}+k$ for all $1\le i \le t-1$.

Finding this is a simple task (as can be seen in the next lemma) and results in the value given in Theorem \ref{katona}. Theorem \ref{cdt} implies it is an upper bound for $|\cF|$ as well. The corresponding lower bound on $|\cF|$ is given by taking all sets of size $a_i,~i=1,2,\dots, t$ where the numbers $a_i$ are the values maximizing the binomial sum.

\begin{mydef}
Define $\mathcal{H} \subseteq 2^{\{0, ..., n\}}$ as follows: a set $H\subseteq \{0, ..., n\}$ is in $\mathcal{H}$ if and only if $\vert h-h' \vert \geq k$ for all different elements $h,h'\in H$. Let $f(H) = \displaystyle\sum_{h \in H}\binom{n}{h}$ for a set $H \in \mathcal{H}$ and let $f(\mathcal{H})= \max\limits_{H \in \mathcal{H}} \{ f(H) \}$.
\end{mydef}

\begin{lemma}
$f({\{i\in [0,n] ~|~i \equiv \lfloor \frac{n}{2} \rfloor \pmod{k}}\})=f(\mathcal{H})$.
\end{lemma}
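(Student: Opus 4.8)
The inequality $f(R)\le f(\mathcal{H})$ is immediate, since $R:=\{i\in[0,n]:i\equiv\lfloor n/2\rfloor\pmod k\}$ itself lies in $\mathcal{H}$; the content is the reverse bound $f(H)\le f(R)$ for every $H\in\mathcal{H}$. Throughout write $m=\lfloor n/2\rfloor$ and recall that $\binom{n}{i}$ is weakly increasing for $0\le i\le m$ and weakly decreasing for $m\le i\le n$. My plan is to transport an arbitrary $H$ to the centred class $R$ in two stages, each of which does not decrease $f$.

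First I would reduce to complete residue classes. Since every $\binom{n}{i}>0$, enlarging $H$ only increases $f$, so it suffices to treat inclusion-maximal $H\in\mathcal{H}$; for these every gap between consecutive elements lies in $[k,2k-1]$ and the extreme elements are within $k$ of $0$ and of $n$. If some gap exceeds $k$, I shift a whole block of consecutive elements by one step towards $m$, pushing elements of $[0,m]$ up and elements of $[m,n]$ down, but never across the centre. Each such move keeps the family in $\mathcal{H}$ (the shortened gap stays $\ge k$) and moves the affected sets towards the peak, so no weight $\binom{n}{\,\cdot\,}$ decreases. Iterating closes every gap lying strictly to one side of $m$ to exactly $k$, and a final run of moves on the unique gap straddling $m$ brings it down to $k$ as well, after which $H$ is a single arithmetic progression of difference $k$. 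Re-saturating (adding the missing endpoints, which again only raises $f$) turns it into a complete residue class $C_a:=\{i\in[0,n]:i\equiv a\pmod k\}$, whence $f(H)\le\max_{0\le a<k}f(C_a)$.

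Second I would identify the best class. Here I compare neighbouring classes through the discrete derivative $\Delta(i):=\binom{n}{i+1}-\binom{n}{i}$, whose sign is that of $n-2i-1$; with the convention $\binom{n}{i}=0$ for $i\notin[0,n]$ this gives $f(C_{a+1})-f(C_a)=\sum_{j\ge 0}\Delta(a+jk)$. Using the symmetry $\binom{n}{i}=\binom{n}{n-i}$ (equivalently $f(C_a)=f(C_{(n-a)\bmod k})$) together with the unimodality of the binomial coefficients, this difference is nonnegative while the progression sits left of the centre and nonpositive once it sits right of it, so $a\mapsto f(C_a)$ is unimodal and is maximised exactly by the class through $m$, namely $R$. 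Combining the two stages yields $f(H)\le f(R)$ for every $H\in\mathcal{H}$, which is the asserted equality.

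The routine-looking first stage hides the only real subtleties. In stage one the delicate point is the single gap straddling $m$: one must verify that at least one of its endpoints can still be moved inwards without leaving $[0,n]$ or violating the separation, and that maximality can be restored afterwards. In stage two the delicate point is proving that $\sum_{j}\Delta(a+jk)$ changes sign only once as $a$ grows — genuine unimodality of $f(C_a)$, not merely its symmetry — which I expect to extract from the log-concavity of the binomial coefficients. I anticipate this sign analysis to be the main obstacle, since the individual terms $\binom{n}{a+jk}$ need not move monotonically under the shift and only the net change has a controllable sign.
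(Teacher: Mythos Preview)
Your two-stage plan --- reduce to full residue classes, then identify the best one --- is exactly the paper's structure, but both stages are executed differently.

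For stage one the paper avoids your shifting procedure entirely. It simply observes that any $H$ with $f(H)=f(\mathcal{H})$ must already satisfy $h_1<k$, $h_{i+1}-h_i=k$ for all $i$, and $h_t>n-k$ (otherwise one can add an endpoint or nudge some $h_i$ towards the centre to strictly increase $f$), so an optimal $H$ is automatically a full residue class. This one-line argument bypasses the delicacies you flag about the straddling gap and re-saturation.

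Stage two is where your proposal has a genuine gap. You want $a\mapsto f(C_a)$ to be unimodal and hope to read this off from the sign of $\sum_j\Delta(a+jk)$ using log-concavity of the binomial coefficients. But log-concavity does not make $\Delta(i)=\binom{n}{i+1}-\binom{n}{i}$ monotone: already for $n=4$ the values $\Delta(-1),\ldots,\Delta(4)$ are $1,3,2,-2,-3,-1$. So as $a$ increases each summand $\Delta(a+jk)$ can move in either direction, and neither log-concavity nor the antisymmetry $\Delta(i)=-\Delta(n-1-i)$ controls the sign of the full sum without an argument essentially equivalent to the unimodality you are trying to prove. The paper sidesteps this by a short induction on $n$: parametrising classes by the signed offset from $n/2$ (writing $H_a^n$ for the class through $\frac{n}{2}+a$), it shows $|a|<|b|\Rightarrow f(H_a^n)>f(H_b^n)$ via Pascal's rule in the form $f(H_a^n)=f(H_{a-1/2}^{n-1})+f(H_{a+1/2}^{n-1})$. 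That induction is the missing idea in your stage two.
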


\begin{proof}
The Lemma is trivial for $n<k$. Assume that $n\ge k$.

\begin{cl}
If $f(H)=f(\mathcal{H})$ for a set $H \in \mathcal{H}$, then $H=\{h_1,h_2,\dots, h_t\}$ satisfies the following conditions:
\begin{enumerate}
    \item $h_1<k$,
    \item $h_{i+1}-h_i=k$ for all $1\leq i<t$,
    \item $h_t>n-k$.
\end{enumerate}
\end{cl}

\begin{proof}
If the first or third case is not satisfied, then we can extend $H$ with $0$ or $n$. If the second case is not satisfied, then there exists a value $i$ such that $h_{i+1}-h_i>k$, so we can increase $f(H)$ by changing $h_i$ to $h_i+1$ or $h_{i+1}$ to $h_{i+1}-1$.
\end{proof}


The above implies that if $f(H)=f(\mathcal{H})$ holds, then $H$ consists of all numbers of a mod $k$ residue class in $[0,n]$. If $k=2$, then the binomial sum is equal to $2^{n-1}$ for both residue classes. From now on, we will assume $k\ge 3$.
For a number $-\frac{k}{2}< a\le \frac{k}{2}$, such that $\frac{n}{2}+a \in \mathbb{Z}$ let
$$H_a^n:=\left\{i\in [0,n] ~|~i \equiv \frac{n}{2}+a \pmod{k}\right\}.$$

\begin{cl}
If $|a|<|b|\le\frac{k}{2}$, and $\frac{n}{2}+a, \frac{n}{2}+b \in \mathbb{Z}$ then $f(H_a^n) > f(H_b^n)$.
\end{cl}

\begin{proof}
We prove this claim by induction on $n$. It is true when $n=k$.
Supposing the statement holds for $n-1$, let us prove it for some $n>k$.

Since $f(H_a^n)=f(H_{-a}^n)$, we can assume that $0\le a<b\le\frac{k}{2}$. From the binomial identities we get
$f \left( H_a^{n} \right) =f \left( H_{a-\frac{1}{2}}^{n-1} \right) +f \left( H_{a+\frac{1}{2}}^{n-1} \right)$ and $f \left( H_b^{n} \right) =f \left( H_{b-\frac{1}{2}}^{n-1} \right) +f \left( H_{b+\frac{1}{2}}^{n-1} \right)$. The inductive hypothesis implies that $f \left( H_{a-\frac{1}{2}}^{n-1} \right)\ge f \left( H_{b-\frac{1}{2}}^{n-1} \right)$ and $f \left( H_{a+\frac{1}{2}}^{n-1} \right)\ge f \left( H_{b+\frac{1}{2}}^{n-1} \right)$ and at least one of the inequalities is strict. (If $b=\frac{k}{2}$, we have to notice $f \left( H_{\frac{k}{2}+\frac{1}{2}}^{n-1} \right)=f \left( H_{\frac{k}{2}-\frac{1}{2}}^{n-1} \right)$ before the hypothesis can be used.)


\end{proof}


We proved that $f(H)$ is maximized if $H=H_0^n$ (for even $n$) or $H\in\{H_{-\frac{1}{2}}^n, H_{\frac{1}{2}}^n\}$ (for odd $n$).
\end{proof}

\section{Forbidding \texorpdfstring{$c$}{c}-times bigger sets}

What is the maximum size of a family from $2^{[n]}$ that contains no two different sets $A$ and $B$ such that $A \subset B$ and $c \cdot \vert A \vert \leq \vert B \vert$? This question was asked by G.O.H. Katona which eventually led to this paper.

In an earlier article \cite{sajat}, this was solved when $c>1$ is an integer:

\begin{thm}\label{main}
Let $n$ and $c>1$ be positive integers and $k= \left\lfloor \frac{n}{c+1} \right\rfloor$. 
Then the largest family $\cF\subset 2^{[n]}$ that contains no two different sets $A,B\in\cF$ such that $A \subset B$ and $c \cdot \vert A \vert \leq \vert B \vert$ is
$$\left\{F\subset [n]~:~k+1\le |F| \le c(k+1)-1\right\}.$$
\end{thm}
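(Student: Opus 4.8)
The condition $D$ here---that $\cF$ contains no $A\subsetneq B$ with $c|A|\le|B|$---is chain-dependent, so by Theorem \ref{cdt} it suffices to compute $S(n,D)$, the maximal weight $\sum_{G\in\cC}\omega(G)$ that a single full chain can carry. The plan is therefore to reduce the whole problem to a question about one chain: for any $\cF\rightarrowtail D$ and any $\cC\in\cn$, the sets $\cF\cap\cC$ form a subfamily of $\cF$ and hence also satisfy $D$, and inside a chain this is a purely numerical condition on the sizes that occur.

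Concretely, if a full chain contributes sets of sizes $a_1<a_2<\dots<a_t$ with $a_1\ge 1$, then (since every pair is nested) $D$ holds if and only if $a_t<c\,a_1$: this single inequality between the extreme sizes already forces $a_j<c\,a_i$ for all $i<j$. Because every binomial coefficient is positive, the weight $\sum_i\binom{n}{a_i}$ of a valid selection is maximized by filling in the whole interval, so the best selections are exactly the blocks $[m,\min(cm-1,n)]$ for $m\ge1$; the only other possibility, the singleton $\{\emptyset\}$, has weight $\binom n0=1$ and never competes. Writing $g(m):=\sum_{i=m}^{\min(cm-1,n)}\binom ni$, this gives $S(n,D)=\max_{m\ge1}g(m)$, so the theorem amounts to the claim that the maximum is attained at $m=k+1$.

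The heart of the argument is this optimization. In the untruncated range one has $g(m+1)-g(m)=\sum_{j=0}^{c-1}\binom{n}{cm+j}-\binom nm$, so I would study the sign of this difference. Using the symmetry $\binom nm=\binom n{n-m}$ together with the unimodality of the binomial coefficients, the sign is governed by how the added block near $c(m+1)-1$ compares in distance-to-center with the mirror $n-m$ of the removed coefficient; the two distances balance around $m=n/(c+1)$, which is exactly why $k=\lfloor n/(c+1)\rfloor$ appears. I expect this step to be the main obstacle: one must show cleanly that $g(m+1)>g(m)$ for $1\le m\le k$ and $g(m+1)<g(m)$ for $m\ge k+1$ (also handling the truncated tail where $cm-1>n$), which likely needs either careful term-by-term binomial estimates or an induction on $n$ via $\binom ni=\binom{n-1}{i-1}+\binom{n-1}i$, in the same spirit as the reduction used for Theorem \ref{katona}. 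This establishes $S(n,D)=g(k+1)=\sum_{i=k+1}^{c(k+1)-1}\binom ni$.

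It then remains to combine the bounds. Theorem \ref{cdt} gives $|\cF|\le S(n,D)=\sum_{i=k+1}^{c(k+1)-1}\binom ni$ for every $\cF\rightarrowtail D$. For the matching lower bound I would check that the family $\{F\subset[n]:k+1\le|F|\le c(k+1)-1\}$ itself satisfies $D$: if $A\subset B$ both lie in it then $c|A|\ge c(k+1)>c(k+1)-1\ge|B|$, so no forbidden pair occurs. Its cardinality is exactly $\sum_{i=k+1}^{c(k+1)-1}\binom ni$, with the understanding that terms with $i>n$ vanish, so the truncation at $n$ is automatic. This meets the upper bound and proves optimality.
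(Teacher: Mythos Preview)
The paper does not actually prove Theorem~\ref{main}: it is quoted from the earlier article \cite{sajat}, and the paper's own contribution in Section~4 is the generalization to real $\lambda>1$, whose proof only shows $|\cF|\le T(n,\lambda)=\max_k t(n,k,\lambda)$ and then takes ``the $k$ for which $t(n,k,\lambda)$ is maximal'' without ever identifying it. Your reduction---chain-dependence, Theorem~\ref{cdt}, then the observation that on a single chain the admissible size-sets are intervals $[m,cm-1]$---is exactly the argument the paper gives for that generalization, so on the part the paper actually proves you are in complete agreement.

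The substantive extra content in your proposal is the claim that $g(m)=\sum_{i=m}^{\min(cm-1,n)}\binom ni$ is maximized at $m=k+1$ with $k=\lfloor n/(c+1)\rfloor$. This is precisely the step the paper outsources to \cite{sajat}, and you are right to flag it as the main obstacle. Your sketch handles the increasing side cleanly: for $1\le m\le k$ one has $(c+1)m\le n$, hence $cm\le n-m$, so already the single term $\binom{n}{cm}\ge\binom nm$ and the remaining $c-1$ positive terms make $g(m+1)-g(m)>0$. The decreasing side is where the real work lies and your outline does not yet close it: for $m\ge k+1$ you need $\sum_{j=0}^{c-1}\binom{n}{cm+j}<\binom nm$, i.e.\ that $c$ binomial coefficients together are beaten by one, and unimodality plus the distance-to-center heuristic alone do not give this. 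One also has to treat the boundary $m=k+1$ carefully, since $c(k+2)-1$ can already exceed $n$, so the clean difference formula may not apply there. So as written this is a correct plan with an honestly acknowledged gap, matching the paper's framework but leaving open exactly the piece the paper defers to \cite{sajat}.
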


Now we will prove a generalization of Theorem \ref{main} using Theorem \ref{cdt}. We will show what the answer is, if we change the integer $c$ to a real number $\lambda > 1$.

First of all, we will have to use these two definitions for the theorem:

\begin{mydef}
Let $1 \leq k \leq n$ be an integer and $1<\lambda \in \mathbb{R}$. Then $$t(n,k,\lambda):= \sum_{k \leq i < \lambda k } \displaystyle\binom{n}{i}.$$
\end{mydef}

\begin{mydef}
Let $1<\lambda \in \mathbb{R}$. Then $$ T(n,\lambda):=\max_{1 \leq k \leq n} t(n,k,\lambda).$$
\end{mydef}

\begin{thm}
Let $n$ be a positive integer, $\lambda>1$ a real number, and $\mathcal{F} \subset 2^{[n]}$. Suppose that $\mathcal{F}$ contains no two members $A,B \in \mathcal{F} $ for which $A \subset B$ and $\lambda \cdot \vert A \vert \leq \vert B \vert$ holds. Then $\vert \mathcal{F} \vert \leq T(n,\lambda)$ and this bound is sharp.
\end{thm}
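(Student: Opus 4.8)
The plan is to apply Theorem \ref{cdt} to the condition $D$ stating that $\mathcal{F}$ contains no two sets $A\subset B$ with $\lambda\cdot|A|\le|B|$, reducing everything to the computation of $S(n,D)$. First I would describe what $D$ means along a single full chain. If a chain $\cC$ meets $\mathcal{F}$ in sets of sizes $a_1<a_2<\dots<a_t$, then applying the forbidden condition to the extreme sizes shows that the whole pairwise constraint collapses to the single inequality $a_t<\lambda a_1$: once this holds, any pair $a_i<a_j$ satisfies $a_j\le a_t<\lambda a_1\le\lambda a_i$, so no forbidden pair occurs. Hence the sizes appearing on any chain lie in a half-open interval $[k,\lambda k)$ with $k=a_1=\min$.

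Next I would bound $S(n,D)$. A full chain contains at most one set of each size, so the weight of $\mathcal{F}\cap\cC$ is $\sum_{i\in I}\binom{n}{i}$, where $I$ is the set of occurring sizes and $\max I<\lambda\min I$. Writing $k=\min I\ge 1$, we get $I\subseteq\{i:k\le i<\lambda k\}$, and since the binomial coefficients are nonnegative this weight is at most $t(n,k,\lambda)$, hence at most $T(n,\lambda)$. The only degenerate case, $\min I=0$, forces $I=\{0\}$ (any positive size $a\in I$ would require $a<\lambda\cdot 0=0$), giving weight $\binom{n}{0}=1\le T(n,\lambda)$. Taking the maximum over all chains and all admissible families yields $S(n,D)\le T(n,\lambda)$, and Theorem \ref{cdt} then gives $|\mathcal{F}|\le T(n,\lambda)$.

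For sharpness I would exhibit the extremal family directly. Let $k^{*}$ attain the maximum in the definition of $T(n,\lambda)$ and put $\mathcal{F}=\{F\subset[n]:k^{*}\le|F|<\lambda k^{*}\}$, so that $|\mathcal{F}|=t(n,k^{*},\lambda)=T(n,\lambda)$. It remains to check $\mathcal{F}\rightarrowtail D$: for any $A\subset B$ in $\mathcal{F}$ we have $|B|\le\lceil\lambda k^{*}\rceil-1<\lambda k^{*}\le\lambda|A|$, so $\lambda|A|>|B|$ and the forbidden configuration never arises.

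The argument is fairly direct given Theorem \ref{cdt}; the chief subtleties are the reduction of the pairwise chain condition to the single inequality $a_t<\lambda a_1$, and the floor/ceiling bookkeeping around $\lambda k$ when verifying that the construction is admissible (together with the degenerate $\emptyset\in\mathcal{F}$ case). Unlike the integer situation of Theorem \ref{main}, where the optimal cutoff is pinned down explicitly as $\lfloor n/(c+1)\rfloor$, for general real $\lambda$ I do not expect a closed form for the optimal $k^{*}$, which is precisely why the statement records the answer as the maximum $T(n,\lambda)$ rather than as a single explicit interval.
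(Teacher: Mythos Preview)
Your proof is correct and follows essentially the same route as the paper: apply Theorem~\ref{cdt}, observe that along a full chain the sizes in $\mathcal{F}\cap\cC$ lie in an interval $[k,\lambda k)$ so the chain weight is bounded by $t(n,k,\lambda)\le T(n,\lambda)$, and then exhibit the union of levels $[k^{*},\lambda k^{*})$ for sharpness. You are in fact slightly more careful than the paper, since you explicitly treat the degenerate case $\min I=0$ (where the interval $[0,0)$ is empty and $t(n,0,\lambda)$ is undefined), which the paper glosses over.
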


\begin{proof}




Let $D$ denote the condition described in the theorem. Let $\cF\subset 2^{[n]}$ be a family that satisfies $D$, and $\cC\in\cn$ be a full chain. If $\cF'=\cF\cap \cC\not=\emptyset$, then let $k$ denote the size of the smallest element of $\cF'$. The condition implies that the size of the elements of $\cF'$ must be in the interval $[k, \lambda \cdot k)$. Therefore
$$\sum_{G \in \cC} \omega_{\cF}(G)=\sum_{G\in\cF'}\binom{n}{|G|}\le t(n,k,\lambda)\le T(n,\lambda).$$
Since the above holds for all $\mathcal{F} \rightarrowtail D$ and $\cC\in\cn$, we get that $S(n,D)\le T(n,\lambda)$. Then Theorem \ref{cdt} implies $|\cF|\le T(n,\lambda)$.

We can show that this bound is sharp. If we choose the $k$ for which the $t(n,k,\lambda)$ is maximal, we can select select the family consisting of sets whose size is in the interval $[k, \lambda \cdot k)$. It is a good family whose size is $T(n,\lambda)$ which is our upper bound.
\end{proof}

\section{Maximizing the number of \texorpdfstring{$\ell$}{l}-chains}

\begin{mydef}
We call a family $\mathcal{G}_\ell=\{G_1 ,..., G_\ell\}$ whose elements satisfy $G_1 \subset ... \subset G_\ell$ an $\ell$-chain.
\end{mydef}

In this section we look at what happens when we want to maximize the number of $\ell$-chains in the family instead of its size. A theorem similar to Theorem \ref{cdt} is proved.

We introduce a weight function as follows:

\begin{mydef}
Let $\cF\subset 2^{[n]}$ be a family. For an $\ell$-chain $\mathcal{G}_\ell \subset 2^{[n]}$, let $\omega_{\cF} (\mathcal{G}_\ell):=
\displaystyle\binom{n}{\vert G_\ell \vert} \displaystyle\binom{\vert G_\ell \vert}{\vert G_{\ell-1} \vert} \dots \displaystyle\binom{\vert G_2\vert}{\vert G_1 \vert}$ if $\mathcal{G}_\ell \subset \mathcal{F}$, and $\omega_{\cF} (\mathcal{G}_\ell):= 0$ otherwise.
We define
$$s_{\ell}(\cF):=\max_{\cC\in \cn}  \sum_{\mathcal{G_\ell} \subset \mathcal{C}} \omega_{\cF} (\mathcal{G}_\ell).$$
\end{mydef}

\begin{mydef}
Let $D$ be a condition. Then $$ S_{\ell}(n,D):=\max_{\cF\subset 2^{[n]},~\cF \rightarrowtail D} s_{\ell}(\mathcal{F}).$$
\end{mydef}

In other words, $S_{\ell}(n,D)$ is defined as the maximum possible weight of a full chain if $\cF$ satisfies $D$.

\begin{thm} \label{cdt2}
Let $n$ be a positive integer and $\mathcal{F} \subset 2^{[n]}$. If $\mathcal{F}$ satisfies a condition $D$, then $\mathcal{F}$ contains at most $S_{\ell}(n,D)$ $\ell$-chains. 
\end{thm}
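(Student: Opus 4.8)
The plan is to mirror the double-counting argument used for Theorem \ref{cdt}, now carried out for the new weight function. By the definitions of $s(\cF)$ and $S(n,D)$, for every full chain $\cC\in\cn$ we have
$$\sum_{\mathcal{G}_\ell\subset\cC}\omega(\mathcal{G}_\ell)\le s(\cF)\le S(n,D),$$
so averaging this inequality over all $n!$ full chains and switching the order of summation should express the number of $\ell$-chains in $\cF$ as a weighted count. First I would write
$$S(n,D)\ge\frac{1}{n!}\sum_{\cC\in\cn}\sum_{\mathcal{G}_\ell\subset\cC}\omega(\mathcal{G}_\ell)=\frac{1}{n!}\sum_{\mathcal{G}_\ell\subset\cF}\omega(\mathcal{G}_\ell)\,N(\mathcal{G}_\ell),$$
where the sum on the right runs over the $\ell$-chains contained in $\cF$ (all others contribute $0$) and $N(\mathcal{G}_\ell)$ denotes the number of full chains passing through every member of $\mathcal{G}_\ell$.

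The crucial step is to evaluate $N(\mathcal{G}_\ell)$ and its interaction with the weight. Writing $a_i=|G_i|$ for an $\ell$-chain $\mathcal{G}_\ell=\{G_1,\dots,G_\ell\}$ with $a_1<\dots<a_\ell$, a full chain contains $\mathcal{G}_\ell$ exactly when its first $a_1$ steps build $G_1$, its next $a_2-a_1$ steps build $G_2\setminus G_1$, and so on, with a final block building $[n]\setminus G_\ell$. Counting the independent orderings within each block gives
$$N(\mathcal{G}_\ell)=a_1!\,(a_2-a_1)!\cdots(a_\ell-a_{\ell-1})!\,(n-a_\ell)!.$$
Next I would expand the product of binomial coefficients defining $\omega(\mathcal{G}_\ell)$ and observe that the factorials $a_\ell!,a_{\ell-1}!,\dots,a_2!$ telescope, leaving
$$\omega(\mathcal{G}_\ell)=\frac{n!}{a_1!\,(a_2-a_1)!\cdots(a_\ell-a_{\ell-1})!\,(n-a_\ell)!}.$$
Consequently $\omega(\mathcal{G}_\ell)\,N(\mathcal{G}_\ell)=n!$ for every $\ell$-chain, so each term of the right-hand sum equals $1$ and the whole expression collapses to exactly the number of $\ell$-chains contained in $\cF$.

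I expect the main obstacle to be purely bookkeeping rather than conceptual: verifying that $N(\mathcal{G}_\ell)$ has precisely the claimed product form (in particular treating the terminal block $[n]\setminus G_\ell$ on the same footing as the internal blocks) and confirming the telescoping cancellation in the definition of $\omega$. Both are routine, and the point worth emphasizing is that the weight function was designed exactly so that the product $\omega(\mathcal{G}_\ell)\,N(\mathcal{G}_\ell)$ is independent of the particular $\ell$-chain. Once this identity is established, combining it with the chain-wise bound $s(\cF)\le S(n,D)$ shows that the number of $\ell$-chains in $\cF$ is at most $S(n,D)$, which is the assertion of Theorem \ref{cdt2}.
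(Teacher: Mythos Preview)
Your proposal is correct and follows essentially the same double-counting argument as the paper: bound each full chain's total weight by $S(n,D)$, average over all $n!$ full chains, switch the order of summation, and use that $\omega(\mathcal{G}_\ell)\cdot N(\mathcal{G}_\ell)=n!$ so that each $\ell$-chain in $\cF$ contributes exactly $1$. The only difference is cosmetic: you spell out the telescoping cancellation in $\omega(\mathcal{G}_\ell)$ explicitly, whereas the paper leaves the product $\omega(\mathcal{G}_\ell)\cdot N(\mathcal{G}_\ell)=n!$ implicit in the final equality.
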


\begin{proof}

Obviously 

\begin{equation}\label{Somega2}
    \sum_{\mathcal{G}_\ell \subset \mathcal{C}} \omega_{\cF} (\mathcal{G}_\ell) \leq s_{\ell}(\cF) \leq  S_{\ell}(n,D)
\end{equation}

holds for any full chain $\cC\in\cn$. We can write this inequality:

$$ S_{\ell}(n,D) \geq \frac{1}{n!} \sum_{\cC\in\cn} \left( \sum_{\mathcal{G}_\ell\subset \cC} \omega_{\cF} (\mathcal{G}_\ell) \right) = \frac{1}{n!} \sum_{\mathcal{G}_\ell \subset \cF} \left( \sum_{\cC \in \cn:~\mathcal{G}_\ell \subset \cC} \omega_{\cF} (\mathcal{G}_\ell) \right) = $$

$$ \frac{1}{n!} \sum_{\mathcal{G_\ell} \subset \mathcal{F}} \left(n- \vert G_\ell \vert \right) ! \left(\vert G_\ell \vert - \vert G_{\ell_1} \vert \right) ! \dots \left(\vert G_2 \vert - \vert G_1 \vert \right) ! \vert G_1 \vert ! \omega_{\cF}  (\mathcal{G_\ell}) =\sum_{\mathcal{G_\ell} \subset \mathcal{F}} 1$$

The first inequality follows from (\ref{Somega2}), since $|\cn| = n!$. At the third step we use the fact that $\left(n- \vert G_\ell \vert \right) ! \left(\vert G_\ell \vert - \vert G_{\ell_1} \vert \right) ! \dots \left(\vert G_2 \vert - \vert G_1 \vert \right) ! \vert G_1 \vert !$ full chains go through all sets of an $\ell$-chain formed by sets of size $\vert G_1 \vert,\dots,\vert G_\ell\vert$.
In the end we get an upper bound for the number of $\ell$-chains in $\mathcal{F}$.
\end{proof}


Now we consider variants of the earlier problems where we have to maximize the number of $\ell$-chains instead of the number of elements. We investigate whether the extremal families will be the same or not.

First we see the condition of Erdős' theorem. We can prove that the same families will be optimal.

\begin{thm}
Consider a family $\cF\subset 2^{[n]}$ such that there are no two sets $A,B\in\cF$ satisfying $A\subset B$ and $\vert B - A \vert > k$ and let $\ell\le k+1$. 
Among such families, the ones containing the most $\ell$-chains are $\left\{F\subset [n]~|~\lfloor \frac{n-k}{2} \rfloor \leq \vert F \vert \leq \lfloor \frac{n+k}{2} \rfloor\right\}$ and $\left\{F\subset [n]~|~\lceil \frac{n-k}{2} \rceil \leq \vert F \vert \leq \lceil \frac{n+k}{2} \rceil\right\}$. (These two are the same if $n+k$ is even.)
\end{thm}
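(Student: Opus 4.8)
The plan is to apply Theorem \ref{cdt2} with $D$ the stated forbidden-configuration condition and to reduce the whole problem to a one-parameter maximization. First I would observe that the weight $\omega(\mathcal{G}_\ell)=\binom{n}{|G_\ell|}\binom{|G_\ell|}{|G_{\ell-1}|}\cdots\binom{|G_2|}{|G_1|}$ depends only on the sizes of the members of $\mathcal{G}_\ell$, and in fact equals the number of full flags $H_1\subset\cdots\subset H_\ell\subseteq[n]$ having those sizes. Fixing a full chain $\cC$, the sets of $\cF\cap\cC$ form a chain in which, by $D$, the largest and smallest members differ in size by at most $k$, so their sizes lie in a window $\{a,a+1,\dots,a+k\}$ of $k+1$ consecutive integers. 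Since $\omega\ge0$, enlarging $\cF\cap\cC$ to the full window only increases $\sum_{\mathcal{G}_\ell\subset\cC}\omega(\mathcal{G}_\ell)$, so $S(n,D)=\max_a N_a$, where $N_a$ is the number of $\ell$-chains all of whose sizes lie in $[a,a+k]$, namely $N_a=\sum_{a\le b_1<\cdots<b_\ell\le a+k}\binom{n}{b_\ell}\binom{b_\ell}{b_{\ell-1}}\cdots\binom{b_2}{b_1}$ (here $\ell\le k+1$ guarantees the window admits $\ell$-chains). The family of all sets with sizes in a fixed window meets every full chain in exactly that window, so for it the averaging in Theorem \ref{cdt2} is tight and it contains exactly $N_a$ $\ell$-chains. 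Thus everything reduces to showing that $N_a$ is maximized precisely at the central window(s).

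Next I would record the symmetry $N_a=N_{n-k-a}$: complementing every set, $i\mapsto n-i$, sends the window $[a,a+k]$ to $[n-a-k,n-a]$ and leaves each weight unchanged, since the multinomial $n!/\big(b_1!(b_2-b_1)!\cdots(n-b_\ell)!\big)$ is invariant under the reflection. Hence $N_a$ is symmetric about $a=\tfrac{n-k}{2}$, and it suffices to prove $N_{a+1}\ge N_a$ for every $a\le\tfrac{n-k-1}{2}$; together with the symmetry this pins the maximum at $a=\lfloor\tfrac{n-k}{2}\rfloor$ when $n+k$ is even, and at the two windows $a=\lfloor\tfrac{n-k}{2}\rfloor,\lceil\tfrac{n-k}{2}\rceil$ (which then have equal $N_a$) when $n+k$ is odd, giving exactly the two families in the statement.

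To compare consecutive windows I would cancel the $\ell$-chains confined to the overlap $[a+1,a+k]$, leaving $N_{a+1}-N_a=h(a+1)-g(a)$, where $g(a)$ counts the $\ell$-chains in $[a,a+k]$ whose smallest set has size $a$ and $h(a+1)$ counts those in $[a+1,a+k+1]$ whose largest set has size $a+k+1$. Fixing the extreme set and passing to the quotient $[n]\setminus G_1$ (for $g$) or complementing inside $G_\ell$ (for $h$) expresses the remaining $\ell-1$ sets as an $(\ell-1)$-chain with all sizes in $\{1,\dots,k\}$, in a ground set of size $n-a$ respectively $a+k+1$. Writing $P(m)$ for the number of $(\ell-1)$-chains with sizes in $\{1,\dots,k\}$ inside an $m$-element set and $\Phi(m):=\binom{n}{m}P(m)$, this gives the clean identities $g(a)=\Phi(n-a)$ and $h(a+1)=\Phi(a+k+1)$, so that $N_{a+1}-N_a=\Phi(a+k+1)-\Phi(n-a)$, where the two arguments sum to $n+k+1$.

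The hard part is the resulting inequality $\Phi(x)\ge\Phi(n+k+1-x)$ for $x=a+k+1\le\tfrac{n+k+1}{2}$. A naive bound is doomed: below the centre $\binom{n}{a+k+1}\ge\binom{n}{a}$ but $P(a+k+1)\le P(n-a)$, so the two factors of $\Phi$ move in opposite directions. The trick I would use is to condition $P$ on the size $s\le k$ of its top set, $P(m)=\sum_s c_s\binom{m}{s}$ with $c_s\ge0$ (for $\ell=1$ this is just $P\equiv1$), and then apply $\binom{n}{m}\binom{m}{s}=\binom{n}{s}\binom{n-s}{m-s}$ to obtain
$$\Phi(m)=\sum_{s} c_s\binom{n}{s}\binom{n-s}{m-s}.$$
This pulls out the $m$-independent factor $\binom{n}{s}$ and reduces the claim to the term-by-term comparison $\binom{n-s}{x-s}\ge\binom{n-s}{\,n+k+1-x-s\,}$. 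The two lower indices are nonnegative (indeed $x-s\ge1$ since $x\ge k+1\ge s+1$) and sum to $n+k+1-2s$, whose midpoint exceeds $\tfrac{n-s}{2}$ by $\tfrac{k+1-s}{2}\ge0$; as $x-s$ lies to the left of this midpoint, the unimodality and symmetry of $\binom{n-s}{\cdot}$ give the inequality for each $s$. Summing over $s$ yields $\Phi(x)\ge\Phi(n+k+1-x)$, hence $N_{a+1}\ge N_a$ below the centre, completing the maximization. To see these families are the only optimal ones, I would finally note that equality in the averaging of Theorem \ref{cdt2} forces $\cF\cap\cC$ to be a full central window on every chain, which, since any additional set would produce two members with size-gap exceeding $k$, identifies $\cF$ exactly with the listed families.
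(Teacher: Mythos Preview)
Your proof is correct and follows essentially the same route as the paper: reduce via Theorem~\ref{cdt2} to maximizing the total weight on a window $H_a=\{a,\dots,a+k\}$, cancel the common terms in $H_a$ and $H_{a+1}$, and compare the boundary contributions (chains with $c_1=a$ versus chains with $c_\ell=a+k+1$) by a binomial inequality. The only packaging difference is that the paper matches the two boundary families by the explicit reflection $(c_1,\dots,c_\ell)\mapsto(d-c_\ell,\dots,d-c_1)$ with $d=2a+k+1$ and compares term by term, whereas you encode both boundary sums as values of the single function $\Phi(m)=\binom{n}{m}P(m)$, expand $P(m)=\sum_s c_s\binom{m}{s}$, and use $\binom{n}{m}\binom{m}{s}=\binom{n}{s}\binom{n-s}{m-s}$; after the change of variables $s=c_\ell-c_1$ your term-by-term inequality $\binom{n-s}{x-s}\ge\binom{n-s}{n+k+1-x-s}$ is literally the paper's $\binom{n+c_1-c_\ell}{c_1}<\binom{n+c_1-c_\ell}{d-c_\ell}$. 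One small point worth making explicit for the uniqueness claim: your comparison is in fact strict for every $s\le k$ whenever $a<\lfloor\tfrac{n-k}{2}\rfloor$ (since then $j\ne j'$ and $j+j'>n-s$), so $N_a$ is maximized only at the central window(s), which is what your final paragraph uses.
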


\begin{proof}
Since we have a chain-dependent-condition we can follow a similar strategy as before. Let $H \subset \{0, 1, \dots, n\}$ be a set such that there are no two elements $c_i, c_j \in H$ satisfying $\vert c_i - c_j \vert > k$.
Consider a full chain $\cC$ and assume that its set of size $i$ is in $\cF$ if and only if $i\in H$. We want to find the set $H$ for which the total weight of the $\ell$-chains within $\mathcal{C}$ is maximal. By Theorem \ref{cdt2}, this will be an upper bound for the number of $\ell$-chains in $\cF$. This bound will be sharp as we can take the family that is the union of full levels whose sizes are given by the optimal $H$.

The expression we want to maximize is
$$\omega_{\cF}(H):=\sum_{\mathcal{G}_\ell \subset \mathcal{C}} \omega_{\cF} (\mathcal{G}_\ell) = \sum_{c_1 < \dots < c_\ell \in H} \binom{n}{c_\ell} \binom{c_\ell}{c_{\ell-1}} \dots \binom{c_2}{c_1} =$$
\begin{equation}\label{c1c2}
\sum_{c_1 < \dots < c_\ell \in H} \frac{n!}{(n-c_\ell)!(c_\ell-c_{\ell-1})!\dots (c_2-c_1)!c_1!}.
\end{equation}

\begin{jel}
Let $H_i = \{ i, i+1, \dots, i+k \}$.
\end{jel}

If $min\{H \}=i$ then $max \{ H \} \leq i+k$, so $H \subset H_i$, therefore the expression \eqref{c1c2} takes its maximum in one of the sets $H_i$. So one has to determine for which $H_i$ will it be the largest.

Now we show that we get the maximum for $i=\lfloor \frac{n-k}{2} \rfloor$ or $ \lceil \frac{n-k}{2} \rceil$. Let us take an integer $0 \leq i < \lfloor \frac{n-k}{2} \rfloor$ and show that $\omega_{\cF}(H_i) < \omega_{\cF}(H_{i+1})$ holds.
The two sums have the same number of terms. We define a bijection between them.


The terms without $i$ and $i+k+1$ appear identically in both sums, they can be matched to each other. The remaining terms are the ones with $i$ in $\omega_{\cF}(H_i)$ and the ones with $i+k+1$ in $\omega_{\cF}(H_{i+1})$. We match the term of $\{i=c_1<c_2< \dots <c_\ell \}$ to the term of $\{2i+k+1-c_\ell<2i+k+1-c_{\ell-1}< \dots <2i+k+1-c_1=i+k+1 \}$.

We can finish the proof with the following claim.

\begin{all}
Let $d=2i+k+1$ and $i=c_1 < c_2 < \dots < c_\ell \leq i+k$. If $i < \left\lfloor\frac{n-k}{2}\right\rfloor$, then $$\frac{n!}{(n-c_\ell)!(c_\ell-c_{\ell-1})!\dots (c_2-c_1)!c_1!} $$ $$< \frac{n!}{(n-(d-c_1))!((d-c_1)-(d-c_2))!\dots ((d-c_{\ell-1})-(d-c_\ell))!(d-c_\ell)!}$$
\end{all}

\begin{proof}
First, note that $d=2i+k+1\le 2\left(\frac{n-k}{2}-1\right)+k+1=n-1<n$.
Most of the factors in the denominators are same, so it is enough to prove that $(n-c_\ell)!c_1! > (n-(d-c_1))!(d-c_\ell)!$, or equivalently $\binom{n+c_1-c_\ell}{c_1}<\binom{n+c_1-c_\ell}{d-c_\ell}$. This is true since $c_1<d-c_\ell$ and $c_1+(d-c_\ell)<n+c_1-c_\ell$.
\end{proof}


We matched the terms of $\omega_{\cF}(H_i)$ and $\omega_{\cF}(H_{i+1})$ such that every term of $\omega_{\cF}(H_{i+1})$ is at least as large as their pair and some of them are strictly larger, so $\omega_{\cF}(H_i) < \omega_{\cF}(H_{i+1})$.

With the same method one can show that if $i > \lceil \frac{n-k}{2} \rceil$, then $\omega_{\cF}(H_i) > \omega_{\cF}(H_{i+1})$, so $\omega_{\cF}(H_i)$ is maximal if $i = \lfloor \frac{n-k}{2} \rfloor$ or $i = \lceil \frac{n-k}{2} \rceil$.

If $n+k$ is even, equality holds only when all full chains intersect $\cF$ in $k+1$ sets of size between $\frac{n-k}{2}$ and $\frac{n+k}{2}$.
If $n+k$ is odd, equality holds only when all full chains intersect $\cF$ in $k+1$ sets, including all sets of size between $\frac{n-k+1}{2}$ and $\frac{n+k-1}{2}$, and exactly one of $\frac{n-k-1}{2}$ and $\frac{n+k+1}{2}$. Since this holds for all chains one can conclude that actually the family contains either all sets of size $\frac{n-k-1}{2}$ or all sets of size $\frac{n+k+1}{2}$.
Therefore the only extremal families are the ones mentioned in the theorem.
\end{proof}

Finally, we will examine a variant of Katona's Theorem for $\ell$-chains. We do not have a complete solution here and only mention some key observations.

Consider a family $\cF \subset 2^{[n]}$ such that there are no two sets $A,B\in\cF$ satisfying $A\subset B$ and $\vert B - A \vert < k$. Among such families we are looking for the one that has the most $\ell$-chains.

Since the condition on $\cF$ is chain-dependent, we can apply Theorem \ref{cdt2} as before to show that there is an optimal family that is the union of some full levels. The question is which levels give the most $\ell$-chains.

In Katona's Theorem $\{F\subset [n]~|~|F|\equiv \lfloor \frac{n}{2} \rfloor \pmod{k}\}$ is an extremal construction. 
Similar to the previous theorem, we were also interested in whether this would be an optimal family for the $\ell$-chains as well. However, we found many counterexamples to show that it is not optimal.

One is when $n=6$, $k=3$ and $\ell=2$. If we choose the family which gives the optimal solution for Katona's Theorem (all sets of size 0, 3 and 6), we get $41$ pieces of $\ell$-chains. However, by selecting all sets of size 1 and 4, we get $60$ pieces of $\ell$-chains.

Moreover, it is not even true that for optimal families $\cF$ there is an $x$ such that $A \in \cF \Longleftrightarrow \vert A \vert \equiv x \pmod{k}$. A counterexample to this is the following: $n=21$, $k=5$, $\ell=2$. In this case, the best solution is to select all sets of size 2, 7, 14 and 19.

{\bf Acknowledgements} The authors are thankful to G.O.H. Katona for the valuable discussions during the preparation of this paper and to the anonymous referees for their insightful remarks to improve the manuscript.

\nocite{*}
\bibliographystyle{plain}
\bibliography{revised3}

\end{document}